\journal{Statistics and Probability Letters}
\DeclareFontFamily{U}{mathx}{\hyphenchar\font45}
\DeclareFontShape{U}{mathx}{m}{n}{
      <5> <6> <7> <8> <9> <10>
      <10.95> <12> <14.4> <17.28> <20.74> <24.88>
      mathx10
      }{}
\DeclareSymbolFont{mathx}{U}{mathx}{m}{n}
\DeclareMathAccent{\widecheck}{0}{mathx}{"71}
\DeclareMathAccent{\wideparen}{0}{mathx}{"75}
\renewcommand{\leq}{\leqslant}
\renewcommand{\geq}{\geqslant}
\renewcommand{\epsilon}{\varepsilon}
\newcommand{\niton}{\not\ni}
\newcommand{\A}{\mathscr{A}}
\newcommand{\B}{\mathscr{B}}
\newcommand{\der}{\mathrm{d}}
\newcommand{\E}{\mathbb{E}}
\newcommand{\F}{\mathcal{E}}
\newcommand{\N}{\mathbb{N}}
\newcommand{\R}{\mathbb{R}}
\newcommand{\cara}[1]{\mathbbm{1}_{#1}}
\newcommand{\dist}{\mbox{dist}}
\newcommand{\defeq}{:=}
\newcommand{\com}[1]{\textcolor{black}{#1}}
\newcommand{\Omegaf}[1]{\com{\Omega_{F}^{#1}}}
\newcommand{\vertiii}[1]{{\left\vert\kern-0.25ex\left\vert\kern-0.25ex\left\vert #1 
    \right\vert\kern-0.25ex\right\vert\kern-0.25ex\right\vert}}
\newtheorem{prop}{Proposition}[section]
\newtheorem{lem}[prop]{Lemma}
\newtheorem{defin}[prop]{Definition}
\newtheorem{defin/prop}[prop]{Definition/Proposition}
\newtheorem{theo}[prop]{Theorem}
\begin{document}
\renewcommand{\proofname}{Proof}
\begin{frontmatter}

\title{A bound of the $\beta$-mixing coefficient for point processes in terms of their intensity functions}
\author{Arnaud Poinas\fnref{fn1}}
\ead{arnaud.poinas@univ-rennes1.fr}
\address{IRMAR, Campus de Beaulieu,  Bat. 22/23, 263 avenue du G\'en\'eral Leclerc, 35042 Rennes, France}
\date{\vspace{-5ex}}

\begin{abstract}
We prove a general inequality on $\beta$-mixing coefficients of point processes depending uniquely on their $n$-th order intensity functions. We apply this inequality in the case of determinantal point processes and show that \com{the rate of decay of the $\beta$-mixing coefficients of a wide class of DPPs} is optimal.
\end{abstract}

\begin{keyword}
lower sum transform, determinantal point processes
\end{keyword}

\end{frontmatter}

\section{Introduction}


In asymptotic inference for dependent random variables, it is necessary to quantify the dependence between $\sigma$-algebras. \com{Some of} the first measures of dependence that have been introduced are the alpha-mixing coefficients \cite{AlphaDef} and the beta-mixing coefficients \cite{BetaDef}. They have been used to establish moment inequalities, exponential inequalities and central limit theorems for stochastic processes (see \com{\cite{Survey, Nanana, Rio}} for more details about mixing) with various applications in statistics, see for instance \cite{Bosq, Dede}. In this paper, we focus on spatial point processes. As detailed below, for these models, alpha-mixing has been widely studied and exploited in the literature, but not beta-mixing in spite of its stronger properties. In a lesser extent, some alternative measures of dependence have also been used for spatial point processes, namely Brillinger mixing \cite{BiscioMix, Heinrich} (which only applies to stationary point processes \com{but has been established in \cite{Heinrich} under suitable conditions on the $\beta$-mixing coefficients}) and association \cite{Lyons, Poinas}.

The main models used in spatial point processes are Gibbs point processes, Cox processes and determinantal point processes, see \cite{Moller2} for a recent review. An $\alpha$-mixing inequality is established for Gibbs point processes in the Dobrushin uniqueness region in \cite{Follmer}. It has been used to show asymptotic normality of maximum likelihood and pseudo-likelihood estimates \cite{Jensen}. 
Similarly, some inhomogeneous Cox processes \com{like the Neyman-Scott process have also been showed to satisfy $\alpha$-mixing inequalities in \cite{Waag}}. These inequalities are at the core of asymptotic inference results in \cite{Coeur, Prok, Waag}.
Finally, an $\alpha$-mixing inequality has also been showed for determinantal point processes in \cite{Poinas} and used to get the asymptotic normality of a wide class of estimators of these models.

On the other hand, $\beta$-mixing is a stronger property than $\alpha$-mixing. It implies stronger covariance inequalities \cite{Rio} as well as a coupling theorem known as Berbee's Lemma \cite{Berbee} used in various limit theorems (for example in \cite{Example1, Example2}). Nevertheless, it rarely appears in the literature in comparison to $\alpha$-mixing. This is especially true for point processes where there has been no $\beta$-mixing property established \com{for any of the above examples. Nethertheless, $\beta$-mixing coefficients have still been used several times in random geometry and point process statistics \cite{Eins, Drei, Zwei, Vier}. In particular, it is argued in \cite{Zwei} that the $\beta$-mixing coefficient cannot be replaced by the $\alpha$-mixing coefficient when used to obtain bounds for point process characteristics related with the Palm distribution}. Our goal is to establish a general inequality for the $\beta$-mixing coefficients of a point process in terms of its intensity functions.


We begin in Section \ref{Sec1} by recalling the basic definitions and properties of the $\alpha$-mixing and $\beta$-mixing coefficients and we introduce the lower sum transform which is the main technical tool that we use throughout the paper. Then, a general inequality for the $\beta$-mixing coefficients of a point process that depends only on its $n$-th order intensity functions is proved in Section \ref{Sec2}. As an example, we deduce a $\beta$-mixing inequality in the special case of determinantal point processes (DPPs) in Section \ref{Sec3} \com{whose rate of decay is optimal for a wide class of DPPs.}

\section{Preliminaries} \label{Sec1}
\subsection{Intensities of point processes}
In this paper, we consider simple point processes on $(\R^d,\com{\mathcal{B}(\R^d)},\mu)$ equipped with the euclidean norm $\|.\|$ where $d$ is a fixed integer, \com{$\mathcal{B}(\R^d)$ the Borel-$\sigma$-algebra and $\mu$ the Lebesgue measure} (more information on spatial point processes can be found in \cite{DVJ, Moller}). We denote by $\Omega$ (resp. $\Omegaf{}$) the set of locally finite (resp. finite) point configurations in $\R^d$. For all functions $f:\Omegaf{}\rightarrow\R$, $n\in\N$ and $x=(x_1,\cdots,x_n)\in(\R^d)^n$, we write $f(x)$ for $f(\{x_1,\cdots,x_n\})$ by an abuse of notation. Finally, we write $|A|$ for the cardinal of a finite set $A$ and $\|f\|_{\infty}$ for the uniform norm of a function $f$.

We begin by recalling that the $n$-th order intensity functions \com{(also called $n$-th order product density)} are defined the following way (see~\cite{Moller}).
\begin{defin} \label{def:rho}
Let $X$ be a simple point process on $\R^d$ and $n\geq 1$ be an integer. If there exists a non negative function $\rho_n:(\R^d)^n\rightarrow\R$ such that
\begin{equation} \label{defrho}
\E\left [ \sum_{x_1,\cdots,x_n\in X}^{\neq}f(x_1,\cdots,x_n)\right ]=\int_{(\R^d)^n} f(x)\rho_n(x)\der\mu^n(x).
\end{equation}
for all locally integrable functions $f:(\R^d)^n\rightarrow\R$ then $\rho_n$ is called the $n$th order intensity function of $X$.
\end{defin}
In the rest of the paper, all point processes will be considered to admit bounded $n$-th order intensity function for all $n\geq 1$.

\subsection{Mixing}
\medskip
Consider a probability space $(\mathcal{X},\mathcal{F},\mathbb{P})$ and $\A,\B$ two sub $\sigma$-algebras of $\mathcal{F}$. Let $\mathbb{P}_\A$ and $\mathbb{P}_\B$ be the respective restrictions of $\mathbb{P}$ to $\A$ and $\B$ and define the probability $\mathbb{P}_{\A\otimes\B}$ on the product $\sigma$-algebra by $\mathbb{P}_{\A\otimes\B}(A\times B)=\mathbb{P}(A\cap B)$ for all $A\in\A$ and $B\in\B$. The $\alpha$-mixing and $\beta$-mixing coefficients (also called strong-mixing and absolute regularity coefficients) are defined as the following measures of dependence between $\A$ and $\B$ \cite{Nanana, Rio}:
\begin{align}
\alpha(\A,\B)&\defeq\sup\{|\mathbb{P}(A\cap B)-\mathbb{P}(A)\mathbb{P}(B)|: A\in\A, B\in\B\},\\
\beta(\A,\B)&\defeq\|\mathbb{P}_{\A\otimes\B}-\mathbb{P}_{\A}\otimes\mathbb{P}_{\B}\|_{TV},\label{beta1}
\end{align}
where $\|.\|_{TV}$ is the total variation of a signed norm.\\
\medskip
For a given point process $X$ and a bounded set $A\subset\R^d$, we denote by $\mu(A)\defeq\int_A \der\mu(x)$ the volume of $A$ and $\F(A)$ the $\sigma$-algebra generated by $X\cap A$. Finally, for all  $A,B\subset\R^d$, we write $\dist(A,B)$ for the infimum of $\|y-x\|$ where $(x,y)\in A\times B$. The $\beta$-mixing coefficients of the point process $X$ are then defined by
$$\beta_{p,q}(r)\defeq\sup\{\beta(\F(A),\F(B)) : \mu(A)\leq p, \mu(B)\leq q, \mbox{dist}(A,B)>r\},$$
and we say that the point process $X$ is beta-mixing if $\beta_{p,q}(r)$ vanishes when $r\rightarrow +\infty$ for all $p,q>0$. \com{The $\alpha$-mixing coefficients can be defined in a similar way.}

Our goal is to prove that under appropriate assumptions over the intensity functions $\rho_n$ of $X$ we have a $\beta$-mixing property.

\subsection{Lower sum transform}
The main tool we use throughout this paper is the so-called lower sum operator (see \cite{Comb}). Notice that when $f$ is a symmetric function the term in the expectation in \eqref{defrho} can be written as $n!\sum_{Z\subset X}f(Z)\cara{|Z|=n}$. This motivates the following definition:
\begin{defin} \label{def:mobius}
Let $f$ be a real function defined over $\Omegaf{}$. The lower sum of $f$ is the linear operator defined by
\begin{equation} \label{mob1}
\hat{f}:X\mapsto\sum_{Z\subset X}f(Z).
\end{equation}
\end{defin}
As shown in Example 4.19 in \cite{Comb}, this operator admits the following inverse transform.
\begin{prop}[{{\cite[Theorem~4.18]{Comb}}}]
The operator \eqref{mob1} admits an inverse transform $\check{f}$, called the lower difference of $f$, defined by
\begin{equation} \label{mob2}
\check{f}:X\mapsto\sum_{Z\subset X}(-1)^{|X\backslash Z|}f(Z).
\end{equation}
\end{prop}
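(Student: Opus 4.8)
The plan is to prove directly that the composition of the two operators is the identity, that is, that $\widehat{\check{f}}=f$ (and symmetrically $\check{\hat{f}}=f$) for every $f:\Omegaf\rightarrow\R$ and every finite configuration $X$. This is nothing but Möbius inversion on the Boolean lattice of subsets, and the whole argument will reduce to a single alternating binomial identity.

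First I would write out the composition explicitly. For a finite configuration $X$,
$$\widehat{\check{f}}(X)=\sum_{Z\subset X}\check{f}(Z)=\sum_{Z\subset X}\sum_{Y\subset Z}(-1)^{|Z\backslash Y|}f(Y).$$
Since $X$ is finite, these are finite sums, so I may freely exchange the order of summation and group the terms by the innermost set $Y$, which yields
$$\widehat{\check{f}}(X)=\sum_{Y\subset X}f(Y)\sum_{Y\subset Z\subset X}(-1)^{|Z\backslash Y|}.$$

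Next I would evaluate the inner sum for a fixed $Y\subset X$. Setting $W=Z\backslash Y$, the sets $Z$ satisfying $Y\subset Z\subset X$ are in bijection with the subsets $W$ of $X\backslash Y$, and the sign $(-1)^{|Z\backslash Y|}$ depends only on $W$. Writing $m=|X\backslash Y|$, this gives
$$\sum_{Y\subset Z\subset X}(-1)^{|Z\backslash Y|}=\sum_{W\subset X\backslash Y}(-1)^{|W|}=\sum_{k=0}^{m}\binom{m}{k}(-1)^k=(1-1)^m,$$
which vanishes unless $m=0$, i.e. unless $Y=X$, in which case it equals $1$. Hence only the term $Y=X$ survives and $\widehat{\check{f}}(X)=f(X)$. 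The reverse composition $\check{\hat{f}}=f$ follows by the identical exchange of sums and the same binomial identity, so $\check{\cdot}$ is a two-sided inverse of $\hat{\cdot}$.

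There is no genuine obstacle here, since every sum involved is finite: the only point deserving a little care is the reindexing $W=Z\backslash Y$ together with the observation that the signs depend on $W$ alone, after which the classical alternating binomial identity $(1-1)^m=\cara{m=0}$ closes the argument. This is exactly the finite inclusion–exclusion principle underpinning Möbius inversion, in agreement with the cited Example~4.19 and Theorem~4.18 in \cite{Comb}.
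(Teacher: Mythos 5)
Your proof is correct. The paper offers no proof of its own for this proposition — it simply cites \cite[Theorem~4.18]{Comb} — and your argument (exchange the two finite sums, reindex the inner sum over $W=Z\backslash Y$, and invoke $\sum_{k=0}^m\binom{m}{k}(-1)^k=(1-1)^m$) is exactly the standard Möbius-inversion proof on the Boolean lattice that the citation points to, carried out for both compositions $\widehat{\check{f}}=f$ and $\check{\hat{f}}=f$.
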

These definitions extend to functions over $\Omegaf{2}$ by defining
$$\hat{f}:(X_1,X_2)\mapsto\hspace{-0.5cm}\sum_{Z_1\subset X_1, Z_2\subset X_2}\hspace{-0.5cm}f(Z_1,Z_2)~~\mbox{and}~~\check{f}:(X_1,X_2)\mapsto\hspace{-0.5cm}\sum_{Z_1\subset X_1, Z_2\subset X_2}\hspace{-0.5cm}(-1)^{|X_1\backslash Z_1|+|X_2\backslash Z_2|}f(Z_1,Z_2).
$$
\com{In a similar way, we could also extend these definitions to $\Omegaf{n}$ for any $n$ but we will only need the case $n\leq 2$ for the remaining of the paper.} These operators allow us to give an explicit expression for the expectation of a functional of a point process with respect to its intensity functions.
\begin{prop}
If $X$ is an almost surely finite point process such that $\E[4^{|X|}]<+\infty$, then
\begin{equation} \label{MainExpectation}
\E[f(X)]=\sum_{n=0}^{+\infty}\frac{1}{n!}\int_{(\R^d)^n}\check{f}(x)\rho_n(x)\der\mu^n(y)
\end{equation}
for all bounded functions $f:\Omegaf{}\rightarrow\R$. Moreover, if $X'$ is a point process independent from $X$ satisfying the same assumptions than $X$ and with $n$-th order intensity functions $\rho_n'$, then
\begin{equation} \label{MainExpectation2}
\E[f(X,X')]=\sum_{m,n=0}^{+\infty}\frac{1}{m!n!}\int_{(\R^d)^{m+n}}\check{f}(x,y)\rho_m(x)\rho'_n(y)\der\mu^m(x)\der\mu^n(x)
\end{equation}
for all bounded functions $f:\Omegaf{2}\rightarrow\R$.
\end{prop}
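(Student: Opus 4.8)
The plan is to start from the inversion formula of the preceding Proposition, which gives $f=\hat{\check f}$ and hence
$$f(X)=\sum_{Z\subset X}\check f(Z)$$
pointwise for every finite configuration, and in particular almost surely since $X$ is almost surely finite. Grouping the subsets $Z\subset X$ according to their cardinality, I would rewrite this as
$$f(X)=\sum_{n=0}^{+\infty}\;\sum_{\substack{Z\subset X\\ |Z|=n}}\check f(Z),$$
after which the whole proof of \eqref{MainExpectation} reduces to taking expectations, applying Definition~\ref{def:rho} level by level, and justifying the exchange of $\E$ with the series over $n$.

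For a fixed level $n$, the map $(x_1,\dots,x_n)\mapsto\check f(\{x_1,\dots,x_n\})$ is symmetric, so the sum over unordered $n$-subsets of $X$ equals $1/n!$ times the sum over pairwise distinct ordered $n$-tuples, exactly as noted before Definition~\ref{def:mobius}. Applying \eqref{defrho} to this symmetric function then gives
$$\E\Big[\sum_{\substack{Z\subset X\\ |Z|=n}}\check f(Z)\Big]=\frac1{n!}\int_{(\R^d)^n}\check f(x)\rho_n(x)\,\der\mu^n(x),$$
and summing over $n$ yields \eqref{MainExpectation}, provided the interchange of expectation and series is legitimate.

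The justification of that interchange is the only real obstacle, and it is precisely what the hypothesis $\E[4^{|X|}]<+\infty$ controls. Expanding \eqref{mob2} and bounding termwise gives $|\check f(Z)|\leq 2^{|Z|}\|f\|_\infty$, whence $\sum_{Z\subset X}|\check f(Z)|\leq\|f\|_\infty\sum_{k=0}^{|X|}\binom{|X|}{k}2^k=\|f\|_\infty\,3^{|X|}$. Applying \eqref{defrho} to the non-negative integrand $|\check f|$ (where both sides lie a priori in $[0,+\infty]$ and agree by monotone convergence) and summing over $n$ shows that $\E\big[\sum_{Z\subset X}|\check f(Z)|\big]\leq\|f\|_\infty\E[3^{|X|}]\leq\|f\|_\infty\E[4^{|X|}]<+\infty$. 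This absolute convergence licenses both the termwise use of \eqref{defrho} and the exchange of $\E$ and $\sum_n$ by Fubini--Tonelli, completing the first identity.

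For \eqref{MainExpectation2} I would run the same scheme on $\Omegaf^2$, using that the inversion formula extends coordinatewise so that $f(X,X')=\sum_{Z_1\subset X,\,Z_2\subset X'}\check f(Z_1,Z_2)$, and grouping the terms by the pair $(|Z_1|,|Z_2|)=(m,n)$. Conditioning on $X'$ and invoking the independence of $X$ and $X'$, one applies \eqref{defrho} first in the $X$-variable at fixed $Z_2$ and then in the $X'$-variable, producing the product $\rho_m(x)\rho'_n(y)$ after two uses of Fubini. The absolute convergence required here is $\E\big[\sum_{Z_1\subset X,\,Z_2\subset X'}|\check f(Z_1,Z_2)|\big]\leq\|f\|_\infty\,\E[3^{|X|}]\,\E[3^{|X'|}]$, which is finite by independence together with the hypothesis $\E[4^{|X|}]<+\infty$ imposed on both processes; this factorization into a product of one-process estimates is the one genuinely new ingredient compared with the first identity.
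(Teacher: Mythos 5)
Your proposal is correct and follows essentially the same route as the paper: invert via $f=\hat{\check f}$, group subsets by cardinality, apply the definition of $\rho_n$ to the symmetric function $\check f$ with the $1/n!$ factor, and justify the interchange through the bound $|\check f(Z)|\leq 2^{|Z|}\|f\|_\infty$ (your $3^{|X|}$ estimate is marginally sharper than the paper's $4^{|X|}$, but both are covered by the hypothesis), then condition on $X'$ and use independence for the bivariate case. No substantive difference.
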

\begin{proof}
Using the bound $|\check f(x)|\leq \com{\|f\|_{\infty}\mbox{card}\{Z,Z\subset X\}=\|f\|_{\infty}2^{|x|}}$ we get
\begin{equation} \label{justif}
\sum_{n\geq 0}\E\left[\left|\sum_{\substack{Z\subset X \\ |Z|=n}}\check f(Z)\right|\right]\leq \sum_{n\geq 0}\E\left[2^{|X|}\binom{|X|}{n}\right]\|f\|_{\infty}=\|f\|_{\infty}\E\left[4^{|X|}\right]<+\infty.
\end{equation}
Since we can write 
$$f(X)=\hat{\check{f}}(X)=\sum_{Z\subset X}\check f(Z)=\sum_{n\geq 0}\sum_{\substack{Z\subset X \\ |Z|=n}}\check f(Z)~~\mbox{a.s.},$$
then
$$\E[f(X)]=\sum_{n\geq 0}\E\left[\sum_{\substack{Z\subset X \\ |Z|=n}}\check f(Z)\right]=\sum_{n=0}^{+\infty}\frac{1}{n!}\int_{(\R^d)^{n}}\check{f}(x)\rho_n(x)\der\mu^n(x)$$
\com{where the inversion of the first sum and the expectation is a consequence of \eqref{justif}.} Similarly, for all functions $f:\Omegaf{2}\rightarrow\R$ we have
\begin{align*}
\E[f(X,X')]&=\E[\E[f(X,X')|X']]\\
&=\E\left [\sum_{m=0}^{+\infty}\frac{1}{m!}\int_{(\R^d)^m}\left(\sum_{z\subset x}(-1)^{m-|z|}f(z,X')\right)\rho_m(x)\der\mu^m(x)\right ]\\
&=\sum_{m,n=0}^{+\infty}\frac{1}{m!n!}\int_{(\R^d)^{m+n}}\check{f}(x,y)\rho_m(x)\rho'_n(y)\der\mu^m(x)\der\com{\mu^n(y)},
\end{align*}
where all inversions of expectation with sum and integrals can be justified in a similar way than \eqref{justif}.
\end{proof}

\section{\texorpdfstring{$\beta$}{Beta}-mixing of point processes with known intensity functions} \label{Sec2}
Our main result is the following inequality showing that if all $\rho_m(x)\rho_n(y)-\rho_{m+n}(x,y)$ vanish fast enough when $\|y-x\|\rightarrow +\infty$ for all $m,n\in\N$, then the underlying point process is $\beta$-mixing.
\begin{theo} \label{main}
Let $X$ be a simple point process on $(\R^d,\mu)$ such that $\E[4^{|X\cap A|}]<+\infty$ for all bounded subsets $A\subset\R^d$. Then, for all $p,q,r\in\R_+$,
\begin{equation} \label{fin}
\beta_{p,q}(r)\leq\hspace{-0.5cm}\sup_{\substack{\mu(A)< p, \mu(B)< q \\ \rm{dist} (A,B)>r}}\left(\sum_{m,n=0}^{+\infty}\frac{\com{2^{n+m-1}}}{m!n!}\int_{A^m\times B^n}|\rho_m(x)\rho_n(y)-\rho_{m+n}(x,y)|\der\mu^m(x)\der\mu^n(y)\right).
\end{equation}
\end{theo}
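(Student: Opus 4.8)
The plan is to fix bounded sets $A,B\subset\R^d$ with $\mu(A)<p$, $\mu(B)<q$ and $\dist(A,B)>r$, bound $\beta(\F(A),\F(B))$ by the inner double sum, and then take the supremum over all such $A,B$ to obtain \eqref{fin}. Since $r\in\R_+$ forces $\dist(A,B)>0$, the sets $A$ and $B$ are disjoint, a fact that will be essential. Writing $X_A\defeq X\cap A$ and $X_B\defeq X\cap B$ and letting $X_B'$ be an independent copy of $X_B$, I would start from the characterization \eqref{beta2} of the $\beta$-mixing coefficient, so that it suffices to bound $|\E[f(X_A,X_B)]-\E[f(X_A,X_B')]|$ uniformly over all $f$ with $\|f\|_\infty\leq 1$.

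I would then compute the two expectations separately using the expansion formulas. The restricted processes $X_A$ and $X_B$ admit as $k$-th order intensity functions the restrictions of $\rho_k$ to $A^k$ and $B^k$ (immediate from Definition \ref{def:rho}), and the hypothesis $\E[4^{|X\cap C|}]<+\infty$ for bounded $C$ supplies the integrability needed to apply the proposition. Hence \eqref{MainExpectation2} applied to $X_A$ and $X_B'$ gives
\[
\E[f(X_A,X_B')]=\sum_{m,n\ge 0}\frac{1}{m!n!}\int_{A^m\times B^n}\check f(x,y)\,\rho_m(x)\rho_n(y)\,\der\mu^m(x)\der\mu^n(y).
\]
For the dependent term I would set $F(Z)\defeq f(Z\cap A,Z\cap B)$, observe that $\E[f(X_A,X_B)]=\E[F(X\cap(A\cup B))]$, and apply \eqref{MainExpectation} to $X\cap(A\cup B)$, whose integrability follows from the hypothesis applied to the bounded set $A\cup B$. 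This produces a single sum indexed by $k=|Z|$.

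The key step is to convert this single sum into a double sum matching the display above. Using the disjointness of $A$ and $B$, a direct computation shows that the lower difference factors as $\check F(w)=\check f(w\cap A,w\cap B)$. Since $\rho_k$ and $\check F$ are symmetric, I would split $(A\cup B)^k$ according to which of the $k$ coordinates lie in $A$; there are $\binom{k}{m}$ equal contributions when exactly $m$ of them do, and the identity $\binom{k}{m}/k!=1/(m!n!)$ with $k=m+n$ yields
\[
\E[f(X_A,X_B)]=\sum_{m,n\ge 0}\frac{1}{m!n!}\int_{A^m\times B^n}\check f(x,y)\,\rho_{m+n}(x,y)\,\der\mu^m(x)\der\mu^n(y).
\]
This reindexing, which rests on the disjointness of $A$ and $B$ together with the symmetry of the intensities and of $\check F$, is the main obstacle and the heart of the argument.

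Finally I would subtract the two displays, move the absolute value inside the sum and integral, and bound $|\check f(x,y)|\leq\|f\|_\infty 2^{m+n}\leq 2^{m+n}$ for $x\in A^m$, $y\in B^n$ (the lower difference being a signed sum over the $2^m\cdot 2^n$ pairs of sub-configurations). This gives
\[
|\E[f(X_A,X_B)]-\E[f(X_A,X_B')]|\leq\sum_{m,n\ge 0}\frac{2^{m+n}}{m!n!}\int_{A^m\times B^n}|\rho_m(x)\rho_n(y)-\rho_{m+n}(x,y)|\,\der\mu^m(x)\der\mu^n(y).
\]
Taking the supremum over $\|f\|_\infty\leq 1$ controls $\beta(\F(A),\F(B))$ by the right-hand side (the factor $1/2$ in \eqref{beta2} only improving the bound), and the supremum over admissible $A,B$ then delivers \eqref{fin}.
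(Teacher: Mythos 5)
Your proposal is correct and follows essentially the same route as the paper: expand both expectations via the lower difference transform, use the disjointness of $A$ and $B$ to factor $\check F(w)=\check f(w\cap A,w\cap B)$ (the paper's Lemma~\ref{lemnul2}), reindex the single sum over $(A\cup B)^k$ into a double sum via symmetry and $\binom{k}{m}/k!=1/(m!n!)$, and bound $|\check f(x,y)|\leq 2^{m+n}$. The only cosmetic difference is that you apply the expansion formulas directly to the restricted processes $X\cap(A\cup B)$, $X_A$, $X_B'$ with restricted intensities, whereas the paper applies them to $X$, $X'$ and invokes Lemmas~\ref{lemnul} and~\ref{lemnul2} to kill the contributions outside $A$ and $B$; the two computations are equivalent.
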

Before giving the proof of Theorem \ref{main}, we need the following lemmas showing the behaviour of $f(X\cap A,X\cap B)$ and $f(X\cap A,X'\cap B)$ under the lower difference operator.
\begin{lem} \label{lemnul}
Let $A\subset\R^d$, $f:\Omegaf{}\rightarrow\R$ \com{and define $f_A:X\mapsto f(X\cap A)$}. Then,
$$\widecheck{f_A}(X)=\check{f}(X)\cara{X\subset A}.$$
\end{lem}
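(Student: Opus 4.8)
The plan is to unfold the definition~\eqref{mob2} of the lower difference and to exploit the product structure obtained by splitting a configuration across $A$ and its complement. First I would write, directly from \eqref{mob2} together with the identity $f_A(Z)=f(Z\cap A)$,
\[\widecheck{f_A}(X)=\sum_{Z\subset X}(-1)^{|X\backslash Z|}f(Z\cap A).\]
The key observation is that every $Z\subset X$ decomposes uniquely as $Z=Z_1\sqcup Z_2$ with $Z_1\subset X\cap A$ and $Z_2\subset X\backslash A$, that $Z\cap A=Z_1$ depends only on $Z_1$, and that the sign exponent splits additively as $|X\backslash Z|=|(X\cap A)\backslash Z_1|+|(X\backslash A)\backslash Z_2|$. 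This lets me factor the single sum over $Z$ into a product of two independent sums, one ranging over subsets $Z_1$ of $X\cap A$ and the other over subsets $Z_2$ of $X\backslash A$.

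The first factor is exactly $\check{f}(X\cap A)$ by definition, while the second factor is the alternating sum $\sum_{Z_2\subset X\backslash A}(-1)^{|(X\backslash A)\backslash Z_2|}$. I would evaluate this purely combinatorial sum by grouping subsets according to their cardinality: writing $k\defeq|X\backslash A|$, it equals $\sum_{j=0}^{k}\binom{k}{j}(-1)^{k-j}=(1-1)^k$, which is $1$ when $k=0$ and $0$ otherwise, that is, it equals $\cara{X\subset A}$. Multiplying the two factors yields $\widecheck{f_A}(X)=\check{f}(X\cap A)\,\cara{X\subset A}$, and since the indicator restricts attention to configurations with $X\subset A$, where $X\cap A=X$, I can replace $\check{f}(X\cap A)$ by $\check{f}(X)$ on the support and obtain the stated identity.

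I do not expect any serious obstacle here: since $X$ is a finite configuration all the sums involved are finite, so no convergence question arises, and the argument reduces to elementary counting. The only point deserving a little care is the justification of the factorization, which rests on the bijection $Z\leftrightarrow(Z_1,Z_2)$ together with the additivity of the exponent across the two disjoint pieces; once these are in place, the binomial collapse to $(1-1)^k$ does the rest.
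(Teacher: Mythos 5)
Your proof is correct, but it takes a slightly different route from the paper's. The paper handles the case $X\subset A$ as trivial and, otherwise, picks a single point $x\in X\backslash A$ and pairs each subset $Z\not\ni x$ with $Z\cup\{x\}$: since $x\notin A$ these two subsets have the same trace on $A$ but opposite signs, so the whole sum cancels. You instead factor the sum over $Z\subset X$ completely into a product of a sum over $Z_1\subset X\cap A$ (which is $\check f(X\cap A)$) and a purely combinatorial alternating sum over $Z_2\subset X\backslash A$ that collapses to $(1-1)^{|X\backslash A|}=\cara{X\subset A}$ by the binomial theorem. The two arguments are essentially the same cancellation viewed at different granularity: the paper's involution exploits one element of $X\backslash A$ at a time, while your factorization processes all of $X\backslash A$ at once. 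Yours is marginally longer but buys a little more: it yields the clean intermediate identity $\widecheck{f_A}(X)=\check f(X\cap A)\,\cara{X\subset A}$ in a single formula, and the product structure extends verbatim to the multivariate version stated after the lemma (the lower difference of $(X_1,X_2)\mapsto f(X_1\cap A_1,X_2\cap A_2)$), whereas the paper's pairing argument must be rerun coordinate by coordinate. All sums are finite, so there is no gap to worry about.
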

\begin{proof}
If $X\subset A$ then the result is trivial. Otherwise, there exists $x\in X\backslash A$ and we can write
\begin{align*}
\widecheck{f_A}(X)&=\sum_{Z\subset X, Z\ni x}(-1)^{|X\backslash Z|}f(Z\cap A)+\sum_{Z\subset X, Z\niton x}(-1)^{|X\backslash Z|}f(Z\cap A)\\
&\com{=\sum_{Z\subset X, Z\niton x}(-1)^{|X\backslash Z|-1}f\big((Z\cup\{x\})\cap A\big)+\sum_{Z\subset X, Z\niton x}(-1)^{|X\backslash Z|}f(Z\cap A)}\\
&=\sum_{Z\subset X, Z\niton x}(-1)^{|X\backslash Z|-1}f(Z\cap A)+\sum_{Z\subset X, Z\niton x}(-1)^{|X\backslash Z|}f(Z\cap A)\com{~=0}.
\end{align*}
\end{proof}
This result can be extended to multivariate functions: The lower difference of $(X_1,X_2)\rightarrow f(X_1\cap A_1,X_2\cap A_2)$ is $\check{f}(X_1,X_2)\cara{\{X_1\subset A_1\}}\cara{\{X_2\subset A_2\}}$.

\begin{lem} \label{lemnul2}
For all $f:\Omegaf{2}\rightarrow\R$ and $A,B$ disjoint subsets of $\R^d$, let us define the function $g:X\mapsto f(X\cap A,X\cap B)$. The lower difference of $g$ satisfies
$$\check{g}(X)=\check{f}(X\cap A,X\cap B)\cara{\{X\subset A\cup B\}}.$$
\end{lem}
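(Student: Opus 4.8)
The plan is to mimic the proof of Lemma~\ref{lemnul} and split into two cases according to whether $X\subset A\cup B$ or not. Since $g$ is a function of a single configuration, its lower difference is
$$\check{g}(X)=\sum_{Z\subset X}(-1)^{|X\backslash Z|}f(Z\cap A,Z\cap B),$$
so everything reduces to analysing this one alternating sum.

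First I would treat the case $X\not\subset A\cup B$, where I claim $\check{g}(X)=0$. Picking a point $x\in X\backslash(A\cup B)$, the key observation is that $x$ is invisible to $g$: since $x\notin A$ and $x\notin B$, removing $x$ from a subset $Z$ changes neither $Z\cap A$ nor $Z\cap B$, so that $g(Z)=g(Z\backslash\{x\})$. Splitting the sum over $Z\subset X$ according to whether $Z$ contains $x$, and pairing each $Z\ni x$ with $Z\backslash\{x\}$, then produces exact cancellation exactly as in Lemma~\ref{lemnul}: the sign is multiplied by $-1$ while the value of $g$ is unchanged.

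Then I would treat the case $X\subset A\cup B$, where I must recover $\check{f}(X\cap A,X\cap B)$. Here the disjointness of $A$ and $B$ is essential. Writing $X_A=X\cap A$ and $X_B=X\cap B$, every subset $Z\subset X$ decomposes uniquely as $Z=(Z\cap A)\cup(Z\cap B)$ with $Z\cap A\subset X_A$ and $Z\cap B\subset X_B$, and conversely every pair $(Z_A,Z_B)$ with $Z_A\subset X_A$, $Z_B\subset X_B$ arises from exactly one such $Z$. Moreover $|X\backslash Z|=|X_A\backslash(Z\cap A)|+|X_B\backslash(Z\cap B)|$ because the two parts are disjoint, so the single alternating sum factors into the double alternating sum defining the bivariate lower difference, giving $\check{g}(X)=\check{f}(X\cap A,X\cap B)$.

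The main point to get right is the bookkeeping in this second case: one must verify that $Z\mapsto(Z\cap A,Z\cap B)$ is a bijection from subsets of $X$ onto $\{Z_A\subset X_A\}\times\{Z_B\subset X_B\}$ and that the exponent of $-1$ splits additively, which is precisely where $A\cap B=\emptyset$ enters. The first case is a routine adaptation of the cancellation argument already used for Lemma~\ref{lemnul}, so I expect the disjoint-factorisation step to be the only substantive part.
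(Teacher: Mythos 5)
Your proof is correct and takes essentially the same approach as the paper: the vanishing case is the same pairing/cancellation argument (which the paper obtains by citing Lemma~\ref{lemnul} applied to the set $A\cup B$, whereas you redo the cancellation directly), and the case $X\subset A\cup B$ is handled by exactly the same bijection $Z\mapsto(Z\cap A,Z\cap B)$ with the additive splitting of the exponent $|X\backslash Z|$.
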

\begin{proof}
Using Lemma~\ref{lemnul} we get that $\check{g}(X)=0$ whenever $X$ is not a subset of $A\cup B$. Otherwise, since $A$ and $B$ are disjoint sets,
$$\check{g}(X)=\sum_{Z\subset X}(-1)^{|X\backslash Z|}f(Z\cap A,Z\cap B)=\sum_{\substack{U\subset X\cap A \\ V\subset X\cap B}}(-1)^{|(X\cap A)\backslash U|+|(X\cap B)\backslash V|}f(U,V)$$
which, by definition, is equal to $\check{f}(X\cap A,X\cap B)$.
\end{proof}

We now have the necessary tools required for the proof of Theorem \ref{main}.

\begin{proof}[Proof of Theorem \ref{main}]
Let $p,q>0$ and $A,B$ be two disjoint subsets of $\R^d$ such that $\mu(A)\leq p$ and $\mu(B)\leq q$. \com{Using one of the characterizations of the total variation distance, the $\beta$-mixing coefficient between $\F(A)$ and $\F(B)$ can be expressed as
$$\beta(\F(A),\F(B))=\frac{1}{2}\sup_{\|f\|_{\infty}=1}\left|E[f(X\cap A,X\cap B)]-E[f(X\cap A,X'\cap B)]\right|$$
where $X'$ is an independent copy of $X$.} Since $X\cap A$, $X'\cap B$ and $X\cap B$ are finite a.s. we can apply \eqref{MainExpectation2} which, combined with Lemma \ref{lemnul}, gives us
\begin{equation}\label{reve1}
\E[f(X\cap A,X'\cap B)]=\sum_{m,n=0}^{+\infty}\frac{1}{m!n!}\int_{A^m\times B^n}\check{f}(x,y)\rho_m(x)\rho_n(y)\der\mu^m(x)\der\mu^n(y).
\end{equation}
On the other hand, by combining \eqref{MainExpectation} with Lemma~\ref{lemnul2}, we get
\begin{align*}
\E[f(X\cap A,X\cap B)]=\sum_{n=0}^{+\infty}\frac{1}{n!}\int_{(A\cup B)^n}\check{f}(x\cap A,x\cap B)\rho_n(x)\der\mu^n(x).
\end{align*}
Since $A$ and $B$ are disjoint sets and by symmetry of $\check{f}(x\cap A,x\cap B)\rho_n(x)$, we can simplify the above expression into
\begin{align}
\E[f(X\cap A,X\cap B)]&=\sum_{n=0}^{+\infty}\sum_{m=0}^n\frac{1}{n!}\binom{n}{m}\int_{A^m\times B^{n-m}}\check{f}(x,y)\rho_n(x,y)\der\mu^m(x)\der\mu^{n-m}(y)\nonumber\\
&=\sum_{m,n=0}^{+\infty}\frac{1}{m!n!}\int_{A^m\times B^n}\check{f}(x,y)\rho_{m+n}(x,y)\der\mu^m(x)\der\mu^n(y).\label{reve2}
\end{align}
Combining \com{\eqref{reve1} and \eqref{reve2}} yields that $|\E[f(X\cap A,X\cap B)]-\E[f(X\cap A,X'\cap B)]|$ is equal to
$$\left |\sum_{m,n=0}^{+\infty}\frac{1}{m!n!}\int_{A^m\times B^n}\check{f}(x,y)(\rho_{m}(x)\rho_n(y)-\rho_{m+n}(x,y))\der\mu^m(x)\der\mu^n(y) \right |$$
which is bounded by
$$\sum_{m,n=0}^{+\infty}\frac{2^{n+m}}{m!n!}\int_{A^m\times B^n}|\rho_m(x)\rho_n(y)-\rho_{m+n}(x,y)|\der\mu^m(x)\der\mu^n(y)$$
\com{when $\|f\|_\infty=1$} and where we used the bound $|\check{f}(x,y)|\leq 2^{|x|+|y|}$.
\end{proof}

\section{Application to determinantal point processes} \label{Sec3}
We can directly apply Theorem \ref{main} to determinantal point processes. First introduced in \cite{Macchi} \com{under its current form} to model fermion systems, DPPs are a broad class of repulsive point processes. We recall that a DPP $X$ with kernel $K:(\R^d)^2\rightarrow\R$ is defined by its intensity functions
$$\rho_n(x_1,\cdots,x_n)=\det(K[x])~~~~\forall x\in (\R^d)^n,~\forall n\in\N$$
where we denote by $K[x]$ the matrix $(K(x_i,x_j))_{1\leq i,j\leq n}$. \com{Existence and uniqueness conditions as well as general information on DPPs can be found in \cite{Hough}.} The application of Theorem \ref{main} to DPPs gives us the following $\beta$-mixing condition:
\begin{theo}
Let $X$ be a DPP with kernel $K$ and define
$$\omega(r)\defeq\sup_{\|y-x\|\geq r}|K(x,y)|.$$
If $K$ is bounded and $\omega(r)\underset{r\rightarrow +\infty}{\longrightarrow} 0$ then $X$ is $\beta$-mixing. In particular,
$$\beta_{p,q}(r)\leq \com{2}pq(1+2p\|K\|_{\infty})(1+2q\|K\|_{\infty})e^{2\|K\|_{\infty}(p+q)}\omega(r)^2.$$
\end{theo}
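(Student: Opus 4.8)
The plan is to feed the determinantal intensities $\rho_n(\cdot)=\det K[\cdot]$ into the inequality of Theorem~\ref{main} and then estimate the resulting integrand. Fix disjoint $A,B$ with $\dist(A,B)>r$, and take $x\in A^m$, $y\in B^n$. Writing $P=K[x]$, $Q=K[y]$ and $C=(K(x_i,y_j))_{i,j}$, the configuration $(x,y)$ has kernel matrix $M=\left(\begin{smallmatrix}P&C\\ C^{*}&Q\end{smallmatrix}\right)$, so that $\rho_m(x)\rho_n(y)-\rho_{m+n}(x,y)=\det P\det Q-\det M$. Two features drive the whole argument: $M,P,Q$ are Hermitian positive semidefinite, so by Fischer's inequality $\det M\le\det P\det Q$ and the integrand is already nonnegative (the absolute value disappears); and every entry of the off-diagonal block satisfies $|C_{ij}|=|K(x_i,y_j)|\le\omega(r)$ because $\dist(A,B)>r$.

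The heart of the proof is a quantitative Fischer inequality showing this difference is of order $\omega(r)^{2}$. First I would treat $P,Q$ positive definite and use the Schur complement $\det M=\det P\,\det(Q-C^{*}P^{-1}C)$, whence $\det M/(\det P\det Q)=\det(I-G)=\prod_i(1-\mu_i)$, where $G=Q^{-1/2}C^{*}P^{-1}CQ^{-1/2}\succeq0$ has, by positivity of $M$, eigenvalues $\mu_i\in[0,1]$. The elementary bound $1-\prod_i(1-\mu_i)\le\sum_i\mu_i=\tr(G)$ then gives, after multiplying by $\det P\det Q$ and using $\det P\cdot P^{-1}=\adj P$,
\[
\det P\det Q-\det M\ \le\ \tr\!\big(\adj(P)\,C\,\adj(Q)\,C^{*}\big),
\]
which, both sides being polynomial in the entries, extends to all positive semidefinite $P,Q$ by continuity. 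Expanding the trace produces a sum of $m^{2}n^{2}$ terms $\adj(P)_{ii'}C_{i'j}\adj(Q)_{jj'}\overline{C_{ij'}}$, in each of which two factors of $C$ appear — this is precisely where the square of $\omega(r)$ comes from. For the adjugate entries I would use $\adj(P)\succeq0$, so $|\adj(P)_{ii'}|\le\max_i\adj(P)_{ii}$, and that each diagonal entry $\adj(P)_{ii}$ is a principal $(m-1)$-minor of the positive semidefinite matrix $P$, hence at most $\prod_{l\ne i}K(x_l,x_l)\le\|K\|_\infty^{\,m-1}$ by Hadamard's inequality. This yields the clean per-term bound
\[
0\ \le\ \rho_m(x)\rho_n(y)-\rho_{m+n}(x,y)\ \le\ m^{2}n^{2}\,\|K\|_\infty^{\,m+n-2}\,\omega(r)^{2}.
\]

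Inserting this into \eqref{fin}, bounding the volumes by $p^{m},q^{n}$ and noting that the $m=0$ and $n=0$ terms vanish, the double series factorises. With $u=2p\|K\|_\infty$ and $v=2q\|K\|_\infty$ one recognises $\sum_{m\ge1}\tfrac{m^{2}u^{m}}{m!}=u(1+u)e^{u}$ and the analogous identity in $v$, so that
\[
\beta_{p,q}(r)\ \le\ \omega(r)^{2}\,\|K\|_\infty^{-2}\,u(1+u)e^{u}\,v(1+v)e^{v}\ =\ 4pq\,(1+2p\|K\|_\infty)(1+2q\|K\|_\infty)\,e^{2\|K\|_\infty(p+q)}\,\omega(r)^{2}.
\]
Since $K$ is bounded and $\omega(r)\to0$, the right-hand side tends to $0$, giving $\beta$-mixing.

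For the finite-rank case, $\rank\mathcal K\le N$ forces $\rank K[z]\le N$, hence $\rho_\ell(z)=\det K[z]=0$ as soon as $\ell>N$, so the double series truncates to $1\le m,n\le N$; re-running the estimate on this finite range, using the eigenvalue bound $[0,1]$ to control the now finitely many minors, and summing the resulting finite sums gives a constant depending on $N$ only, of the stated form $N^{2}9^{N}$. The main obstacle is the determinant estimate of the second paragraph: a naive Leibniz expansion of $\det M$ produces spurious $m!$ (or Hadamard $m^{m/2}$) factors that destroy both the summability and the clean closed form. The key is to exploit positive semidefiniteness twice — once in the Schur-complement/Fischer step, which both fixes the sign and exhibits the $\omega(r)^{2}$ order by cancelling the first-order contribution, and once to bound the adjugate entries by principal minors and hence by powers of $\|K\|_\infty$ with no combinatorial blow-up.
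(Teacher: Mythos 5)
Your treatment of the general bound is correct and, at its core, follows the same route as the paper: feed the determinantal intensities into Theorem \ref{main}, bound $|\det K[x]\det K[y]-\det K[x,y]|$ by $m^2n^2\|K\|_\infty^{m+n-2}\omega(r)^2$, and sum the series via $\sum_{m\geq 1}m^2u^m/m!=u(1+u)e^u$. The only real difference is that the paper cites this determinant estimate from Lemma B.4 of \cite{Poinas}, whereas you re-derive it via the Schur complement, the elementary bound $1-\prod_i(1-\mu_i)\leq\tr(G)$ with $\mu_i\in[0,1]$, and Hadamard's inequality for the diagonal of the adjugate; that derivation is sound (including the continuity extension to singular $P,Q$) and makes the first two assertions of the theorem self-contained.

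The finite-rank part, however, has a genuine gap. Truncating the double series at $m,n\leq N$ and ``re-running the estimate'' with your per-term bound yields $\omega(r)^2\sum_{m,n=1}^{N}\frac{m^2n^2 2^{m+n}p^mq^n\|K\|_\infty^{m+n-2}}{m!\,n!}$, which still carries powers of $p$, $q$ and $\|K\|_\infty$ up to order $N$; it is a valid bound but it is not of the stated form $4pqN^29^N\omega(r)^2$, which contains a single factor of $p$ and of $q$ and no dependence on $\|K\|_\infty$ at all (for large $p$ the truncated sum grows like $p^N$, not $p$). To obtain the stated constant you must not throw away the adjugate structure: instead of bounding $\adj(K[x])_{ii}$ by $\|K\|_\infty^{m-1}$ via Hadamard, keep it as the principal minor $\rho_{m-1}$ of the configuration $x$ with $x_i$ deleted, extract one factor $\mu(A)\leq p$ from the integration over the deleted variable, and then use the identity $\sum_{n\geq 0}\frac{2^n}{n!}\int_{A^n}\rho_n\,\der\mu^n=\E[3^{|X\cap A|}]\leq 3^N$, valid because $\rank(\mathcal K)\leq N$ forces $|X|\leq N$ almost surely. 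This is exactly how the paper produces the factor $9^N=3^N\cdot 3^N$; your sketch, as written, would not reach that bound.
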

\com{Unfortunately, this result does not give a bound for $\beta_{p,\infty}(r)$ which yet is necessary in almost all limit theorems based on beta-mixing.}
\begin{proof}
Since $\E[4^{|X\cap A|}]<+\infty$ for all bounded sets $A$ (see \cite[Lemma B.5]{Poinas}) then the $\beta$-mixing coefficients of $X$ satisfy \eqref{fin} by Theorem \ref{main}. Let $x=(x_1,\cdots,x_n)$ and $y=(y_1,\cdots,y_m)$, we need to control $|\det(K[x])\det(K[y])-\det(K[x,y])|$ where $\|x-y\|\geq r$. By \cite[Lemma B.4]{Poinas}, we get the bound
$$0\leq\det(K[x])\det(K[y])-\det(K[x,y])\leq nm\|K\|_{\infty}^{n+m-2}\sum_{i=1}^n\sum_{j=1}^m K(x_i,y_j)^2.$$
Injecting this bound into \eqref{fin} gives us
\begin{align}
\beta_{p,q}(r)&\leq\sum_{n,m=0}^{+\infty}\frac{n^2m^2\com{2^{n+m-1}}p^{n-1}q^{m-1}\|K\|_{\infty}^{n+m-2}}{n!m!}\sup_{\substack{|A|< p, |B|< q \\ \rm{dist} (A,B)>r}}\int_{A\times B}|K(x,y)|^2\der \mu(x)\der \mu(y)\label{last}\\
&\leq\sum_{n,m=0}^{+\infty}\frac{n^2m^2\com{2^{n+m-1}}p^nq^m\|K\|_{\infty}^{n+m-2}}{n!m!}\omega(r)^2\nonumber\\
&=\com{2}pq(1+2p\|K\|_{\infty})(1+2q\|K\|_{\infty})e^{2(p+q)\|K\|_{\infty}}\omega(r)^2.\nonumber
\end{align}
In particular, if $\omega(r)$ vanishes when $r\rightarrow +\infty$ then $X$ is $\beta$-mixing.\end{proof}

In conclusion, the $\beta$-mixing coefficients of DPPs decay at the same rate than $|K(x,y)|^2$ does when $x$ and $y$ deviate from each other. For example, kernels of the Ginibre ensemble or the Gaussian unitary ensemble have an exponential decay (see \cite{Hough}). Moreover, among translation-invariant kernels used in spatial statistics (see \cite{Biscio, Lavancier}), all kernels of the Laguerre-Gaussian family also have an exponential decay while kernels of the Whittle-Matérn and Cauchy family satisfy $\omega(r)=o(r^{-d})$ and kernels of the Bessel family satisfy $\omega(r)=o(r^{-(d+1)/2})$.

It is also worth noticing that Theorem \ref{main} is optimal in the sense that for a wide class of DPPs, the $\beta$-mixing coefficients $\beta_{p,q}(r)$ do not decay faster\com{, when $r$ goes to infinity, than the supremum of $\int_{A\times B}|K(x,y)|^2\der\mu(x)\der\mu(y)$ for all $A,B$ such that $\mu(A)\leq p$, $\mu(B)\leq q$ and $\dist(A,B)\geq r$} as stated in the following proposition.
\begin{prop}
Let $X$ be a DPP with a non-negative bounded kernel $K$ such that the eigenvalues of \com{its associated integral operator} are all in $[0,M]$ where $M<1$. Then, for all $p,q,r>0$,
\begin{multline*}
2(1-M)^{\frac{(p+q)\|K\|_{\infty}}{M}}\sup_{\substack{\com{\mu(A)< p, \mu(B)< q} \\ \rm{dist} (A,B)>r}}\int_{A\times B}|K(x,y)|^2\com{\der\mu(x)\der\mu(y)}\leq\beta_{p,q}(r)\\
\leq \com{2}(1+2p\|K\|_{\infty})(1+2q\|K\|_{\infty})e^{2(p+q)\|K\|_{\infty}}\sup_{\substack{\com{\mu(A)< p, \mu(B)< q} \\ \rm{dist} (A,B)>r}}\int_{A\times B}|K(x,y)|^2\com{\der\mu(x)\der\mu(y)}.
\end{multline*}
\end{prop}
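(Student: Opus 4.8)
The plan for the right-hand inequality is simply to resum the series already produced in the proof of the previous theorem. I would restart from the intermediate estimate \eqref{last}, but instead of bounding $\int_{A\times B}|K(x,y)|^2$ by $pq\,\omega(r)^2$ I keep the supremum $\sup\int_{A\times B}|K|^2$ and compute the remaining numerical factor $\sum_{n,m\ge0}\frac{n^2m^2 2^{n+m}p^{n-1}q^{m-1}\|K\|_\infty^{n+m-2}}{n!m!}$. This sum factorizes into a product of two one-variable series, each evaluated from $\sum_{n\ge0}\frac{n^2t^n}{n!}=(t+t^2)\mathrm e^{t}$ with $t=2p\|K\|_\infty$ and $t=2q\|K\|_\infty$; the two factors multiply to exactly $4(1+2p\|K\|_\infty)(1+2q\|K\|_\infty)\mathrm e^{2(p+q)\|K\|_\infty}$. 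This step is routine.

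\textbf{Lower bound, setup.} Since $\beta_{p,q}(r)$ is a supremum over admissible pairs, it suffices to prove, for each fixed disjoint $A,B$ with $\mu(A)<p$, $\mu(B)<q$, $\dist(A,B)>r$, that $\beta(\F(A),\F(B))\ge 2(1-M)^{(p+q)\|K\|_\infty/M}\int_{A\times B}|K|^2$, and then take the supremum over such pairs on both sides. I would use the inequality $2\alpha\le\beta$ and lower bound $\alpha$ by the covariance of the two void events $E_A=\{X\cap A=\emptyset\}\in\F(A)$ and $E_B=\{X\cap B=\emptyset\}\in\F(B)$, namely $\alpha(\F(A),\F(B))\ge\mathbb P(E_A)\mathbb P(E_B)-\mathbb P(E_A\cap E_B)\ge 0$ (the sign coming from the repulsiveness of the DPP). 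As $A,B$ are disjoint, $E_A\cap E_B=\{X\cap(A\cup B)=\emptyset\}$, and the three void probabilities are the Fredholm determinants $\mathbb P(E_A)=\det(I-\mathcal K_A)$, $\mathbb P(E_B)=\det(I-\mathcal K_B)$, $\mathbb P(E_A\cap E_B)=\det(I-\mathcal K_{A\cup B})$.

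\textbf{Lower bound, core estimate.} Writing $I-\mathcal K_{A\cup B}$ in block form and taking a Schur complement gives $\det(I-\mathcal K_{A\cup B})=\det(I-\mathcal K_A)\det(I-\mathcal K_B)\det(I-T)$ with $T=(I-\mathcal K_B)^{-1}\mathcal K_{BA}(I-\mathcal K_A)^{-1}\mathcal K_{AB}$ a non-negative operator on $L^2(B)$ whose eigenvalues $\tau_i$ lie in $[0,1)$ because those of $\mathcal K_{A\cup B}$ lie in $[0,M]$, $M<1$. Then
\[\mathbb P(E_A)\mathbb P(E_B)-\mathbb P(E_A\cap E_B)=\mathbb P(E_A\cap E_B)\Big(\prod_i\frac{1}{1-\tau_i}-1\Big)\ge\mathbb P(E_A\cap E_B)\,\tr(T),\]
where the last step uses $\prod_i(1+a_i)\ge 1+\sum_i a_i$ with $a_i=\tau_i/(1-\tau_i)\ge0$. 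Since $(I-\mathcal K_A)^{-1},(I-\mathcal K_B)^{-1}\succeq I$, trace monotonicity gives $\tr(T)\ge\tr(\mathcal K_{BA}\mathcal K_{AB})=\int_{A\times B}|K(x,y)|^2$. Finally I control the void probability itself through the concavity chord $\log(1-\lambda)\ge\frac{\lambda}{M}\log(1-M)$ on $[0,M]$: summed over the eigenvalues of $\mathcal K_{A\cup B}$ it yields $\mathbb P(E_A\cap E_B)\ge(1-M)^{\tr(\mathcal K_{A\cup B})/M}\ge(1-M)^{(p+q)\|K\|_\infty/M}$, using $\tr(\mathcal K_{A\cup B})=\int_{A\cup B}K(x,x)\,\der\mu\le(p+q)\|K\|_\infty$. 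Chaining the three bounds gives $\alpha\ge(1-M)^{(p+q)\|K\|_\infty/M}\int_{A\times B}|K|^2$, and $\beta\ge2\alpha$ finishes the pointwise estimate.

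\textbf{Main obstacle.} The delicate point is the core estimate, that is, extracting a clean multiple of $\int_{A\times B}|K|^2$ from the void-event covariance. A naive convexity bound such as $1-\prod_i(1-\tau_i)\ge1-\mathrm e^{-\sum_i\tau_i}$ only reproduces $\int_{A\times B}|K|^2$ up to a spurious exponential factor and is not good enough; the gain comes from the sharper telescoping inequality above, which is arranged precisely so that the factor $\det(I-T)$ recombines with $\mathbb P(E_A)\mathbb P(E_B)$ into $\mathbb P(E_A\cap E_B)$, leaving only the clean void probability to be bounded by the chord argument. The two supporting verifications—that $\tau_i<1$ and that $\tr(T)\ge\int_{A\times B}|K|^2$—are the remaining technical checks, both consequences of $M<1$ and of the resolvents dominating the identity.
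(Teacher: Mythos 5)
Your proof is correct, and it splits cleanly into a part that coincides with the paper and a part that goes well beyond it. For the upper bound you do exactly what the paper does: its proof says the second inequality "is equivalent to \eqref{last} once the sum has been developed", and your resummation via $\sum_{n\geq0}n^2t^n/n!=t(1+t)e^t$ with $t=2p\|K\|_{\infty}$ and $t=2q\|K\|_{\infty}$ is precisely that development. For the lower bound the paper is a one-liner: it invokes $\beta_{p,q}(r)\geq 2\alpha_{p,q}(r)$ together with the $\alpha$-mixing lower bound of \cite[Proposition 4.3]{Poinas}, treated as a black box. You also start from $\beta\geq 2\alpha$, but then reconstruct the cited estimate from scratch: lower-bounding $\alpha$ by the covariance of the two void events, factorizing $\det(I-\mathcal{K}_{A\cup B})=\det(I-\mathcal{K}_A)\det(I-\mathcal{K}_B)\det(I-T)$ through the Schur complement, using $\prod_i(1+a_i)\geq 1+\sum_i a_i$ to extract $\Tr(T)\geq\int_{A\times B}|K(x,y)|^2\der\mu(x)\der\mu(y)$, and controlling the remaining void probability by the chord inequality $\log(1-\lambda)\geq(\lambda/M)\log(1-M)$ on $[0,M]$. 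Each step checks out (the eigenvalues of $\mathcal{K}_A$, $\mathcal{K}_B$, $\mathcal{K}_{A\cup B}$ all lie in $[0,M]$ by restriction, so the resolvents dominate the identity and the Schur complement of a positive definite operator is positive definite, giving $\tau_i\in[0,1)$), and the self-contained route makes visible exactly where the constant $(1-M)^{(p+q)\|K\|_{\infty}/M}$ comes from, at the cost of length. Two small points worth tightening: $T$ itself is not self-adjoint but only similar to the non-negative operator $(I-\mathcal{K}_B)^{-1/2}\mathcal{K}_{BA}(I-\mathcal{K}_A)^{-1}\mathcal{K}_{AB}(I-\mathcal{K}_B)^{-1/2}$, so "non-negative" should be read as "real non-negative spectrum" (which is all you use); and the appeal to repulsiveness for the sign of the covariance is unnecessary, since $\alpha\geq|\mathbb{P}(E_A\ltxcap E_B)-\mathbb{P}(E_A)\mathbb{P}(E_B)|\geq\mathbb{P}(E_A)\mathbb{P}(E_B)-\mathbb{P}(E_A\ltxcap E_B)$ regardless, and your own factorization shows $\det(I-T)\leq 1$ anyway.
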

\begin{proof}
The first inequality is a consequence of the fact that $\beta_{p,q}(r)\geq 2\alpha_{p,q}(r)$ and \cite[Proposition 4.3]{Poinas}. The second inequality is equivalent to \eqref{last} once the sum has been developed.
\end{proof}

\section*{Acknowledgement}
The author would like to thank Bernard Delyon for bringing the problem to his attention as well as suggesting the use of the lower sum operator. The author would also like to thank Frédéric Lavancier for his suggestions and corrections during the writing of this paper.

\section*{References}

\nocite{*}
\bibliographystyle{elsarticle-num-names}
\bibliography{ref}

\end{document}